\newtheorem{thm}{Theorem}[section]
\newtheorem{cor}[thm]{Corollary}
\newtheorem{prop}[thm]{Proposition}
\theoremstyle{definition}
\theoremstyle{remark}
\numberwithin{equation}{section}
\title{\bf Sparse pseudoinverses via LP and SDP \\ relaxations of Moore-Penrose\footnote{Partial support from  NSF grant CMMI--1160915 and ONR grant N00014-14-1-0315.}}
\author{%
Victor K. Fuentes\\%
University of Michigan\\%
\textit{vicfuen@umich.edu}\\[3mm]
Marcia Fampa\\%
Universidade Federal do Rio de Janeiro\\%
\textit{fampa@cos.ufrj.br}\\[3mm]
Jon Lee\\%
University of Michigan\\%
\textit{jonxlee@umich.edu}
}
\date{\today}
\begin{document}

\maketitle

\thispagestyle{empty}

\begin{abstract}

Pseudoinverses are ubiquitous tools for handling over- and under-determined
systems of equations. For computational efficiency,
sparse pseudoinverses are
desirable. Recently, sparse left and right pseudoinverses were introduced,
using 1-norm minimization and linear programming. We introduce several new
sparse pseudoinverses by developing
linear and semi-definite programming relaxations of the well-known
Moore-Penrose properties.

\vspace{2em}
\noindent\textit{Keywords:}
sparse;
pseudoinverse;
semi-definite programming;
linear programming.

\end{abstract}

\section*{Introduction}\label{sec:intro}

Pseudoinverses are a central tool in matrix algebra and its applications.
Sparse optimization is concerned with finding sparse solutions of
optimization problems, often for computational efficiency in the use of the output of the optimization.
There is usually a tradeoff between an ideal dense solution and a less-ideal sparse solution,
and sparse optimization is often focused on tractable methods for striking a good balance.
Recently, sparse optimization has been used to calculate tractable sparse left and right pseudoinverses, via linear programming.
We extend this theme to derive several other tractable sparse pseudoinverses, employing linear and semi-definite programming.

In \S\ref{sec:pseudo}, we give a very brief overview of pseudoinverses, and
in  \S\ref{sec:sparse}, we describe
some prior work on sparse left and right pseudoinverses.
In  \S\ref{sec:new},
we present new sparse pseudoinverses based on tractable convex relaxations of
the Moore-Penrose properties. In  \S\ref{sec:comp}, we present preliminary computational
results.
Finally, in \S\ref{sec:conclusion}, we make brief conclusions and describe our ongoing work.


In what follows, $I_{n}$/$\textbf{0}_{n}$/$\vec{0}_{n}$/$\vec{1}_{n}$ denotes an order-$n$ identity matrix/zero matrix/zero vector/all-ones vector.

\section{Pseudoinverses}\label{sec:pseudo}

When a real matrix $A\in\mathbb{R}^{m\times n}$ is not square or not invertible,
we consider pseudoinverses  of $A$ (see \cite{rao1971} for a wealth of information on this topic).
For example, there is the well-known \emph{Drazin inverse} for square and even non-square matrices
(see \cite{CG1980}) and the \emph{generalized Bott-Duffin inverse} (see \cite{Yonglin1990}).

The most well-known pseudoinverse of all is the \emph{M-P (Moore-Penrose) pseudoinverse},
independently discovered by  E.H. Moore and R. Penrose.
If $A=U\Sigma V'$ is the real singular value decomposition of $A$ (see \cite{GVL1996}, for example),
then the
M-P pseudoinverse of $A$ can be defined as $A^+:=V\Sigma^+ U'$, where $\Sigma^+$
has the shape of the transpose of the diagonal matrix $\Sigma$, and is derived from $\Sigma$
by taking reciprocals of the non-zero (diagonal) elements of $\Sigma$ (i.e., the non-zero
singular values of $A$).
The M-P pseudoinverse, a central object in matrix theory, has many
concrete uses. For example, we can use it to solve least-squares problems,
 and we can use it, together
with a norm, to define condition numbers of matrices.
The M-P pseudoinverse
is calculated, via its connection with the real singular value decomposition,
by the \verb;Matlab; function \verb;pinv;.

\section{Sparse left and right pseudoinverses}\label{sec:sparse}

It is well known that in the context of seeking a sparse solution in a convex set,
a surrogate for minimizing the sparsity is to minimize the 1-norm. In fact,
if the components of the solution have absolute value no more than unity,
a minimum 1-norm solution has 1-norm no greater than the number of nonzeros
in the sparsest solution.
With this in mind, \cite{pseudoinverse} defines sparse left and right pseudoinverses
in a natural and tractable manner. Below, $\|\cdot\|_1$ denotes entry-wise 1-norm.

For an ``overdetermined case'', \cite{pseudoinverse} defines a \emph{sparse left pseudoinverse} via
the convex
formulation
\begin{equation} \tag{$\mathcal{O}$} \label{overdetermined}
\min \left\{
\|H\|_{1} ~:~ HA = I_{n}
\right\}.
\end{equation}
For an ``underdetermined case'', \cite{pseudoinverse} defines  a \emph{sparse right pseudoinverse} via
the convex
formulation
\begin{equation} \tag{$\mathcal{U}$} \label{underdetermined}
\min \left\{
\|H\|_{1} ~:~ AH = I_{m}
\right\}.
\end{equation}

These definitions emphasize sparsity, while in some sense
putting a rather mild emphasis on the aspect of being a pseudoinverse.
We do note that if the columns of $A$ are linearly independent, then
the M-P pseudoinverse is precisely $(A'A)^{-1}A'$, which is
a left inverse of $A$. Therefore, if $A$ has full column rank, then the M-P pseudoinverse
is a feasible $H$ for ($\mathcal{O}$). Conversely, if $A$ does not have full column rank,
then ($\mathcal{O}$) has no feasible solution, and so there is no sparse left inverse in such a case.
On the other hand,  if the rows of $A$ are linearly independent, then
the M-P pseudoinverse is precisely $A'(AA')^{-1}$,
which is a right inverse of $A$.
 Therefore, if $A$ has full row rank, then the M-P pseudoinverse
is a feasible $H$ for ($\mathcal{U}$). Conversely, if $A$ does not have full row rank,
then ($\mathcal{U}$) has no feasible solution, and so there is no sparse right inverse in such a case.

These sparse pseudoinverses are easy to calculate, by linear programming:
\[
 \tag{$LP_{\mathcal{O}}$} \label{firstLP}
\min \left\{\textstyle\sum_{ij\in m\times n} ~t_{ij} ~:~ t_{ij} \geq h_{ij},~ t_{ij} \geq -h_{ij},~ \forall ij\in m\times n;~ HA = I_n \right\}
\]
for the sparse left pseudoinverse, and
\[
\tag{$LP_{\mathcal{U}}$} \label{secondLP}
\min \left\{\textstyle\sum_{ij\in m\times n} ~t_{ij} ~:~ t_{ij} \geq h_{ij},~ t_{ij} \geq -h_{ij},~ \forall ij\in m\times n;~ AH = I_m \right\}
\]
for the sparse right pseudoinverse. In fact, the (\ref{firstLP}) decomposes row-wise for $H$,
and (\ref{secondLP}) decomposes column-wise for $H$, so calculating these sparse pseudoinverses can be
made very efficient at large scale.
Also, these sparse pseudoinverses do have nice mathematical properties
(see \cite{pseudoinverse}).

\section{New sparse relaxed Moore-Penrose pseudoinverses}\label{sec:new}

We seek to define different tractable sparse pseudoinverses, based on the
 the following nice characterization of the
M-P pseudoinverse.

\begin{thm}
For $A$ $\in$ $R^{m \times n}$, the M-P pseudoinverse $A^+$ is the unique $H$ $\in$ $\mathbb{R}^{n \times m}$ satisfying:
	\begin{align}
		& AHA = A \label{property1} \tag{P1}\\
		& HAH = H \label{property2} \tag{P2}\\
		& (AH)^{\prime} = AH \label{property3} \tag{P3}\\
		& (HA)^{\prime} = HA \label{property4} \tag{P4}
	\end{align}
\end{thm}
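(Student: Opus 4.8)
The plan is to establish the theorem in two parts: first existence (the M-P pseudoinverse, as defined via the SVD, actually satisfies the four Penrose properties), and then uniqueness (any matrix satisfying all four properties must coincide with it). For existence, I would write $A = U\Sigma V'$ and $A^+ = V\Sigma^+ U'$, and verify each property by direct substitution, exploiting the orthogonality relations $U'U = I$ and $V'V = I$ on the relevant blocks. The key computational fact is that $\Sigma \Sigma^+$ and $\Sigma^+ \Sigma$ are diagonal projection matrices with entries in $\{0,1\}$ (ones exactly where the singular values are nonzero), which are symmetric; this immediately delivers (P3) and (P4) since $AA^+ = U\Sigma\Sigma^+ U'$ and $A^+A = V\Sigma^+\Sigma V'$, while $\Sigma\Sigma^+\Sigma = \Sigma$ and $\Sigma^+\Sigma\Sigma^+ = \Sigma^+$ give (P1) and (P2). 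This part is essentially bookkeeping with the SVD and I do not expect it to be the obstacle.

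The substantive part is uniqueness: supposing both $H$ and $K$ satisfy (P1)--(P4) for the same $A$, I must show $H = K$. The standard approach is a chain of rewrites that repeatedly swaps one matrix for the other using the four identities. I would start from $H = HAH$ (by P2), replace the middle $A$ using $A = AKA$ (by P1 applied to $K$) to get $H = HAKAH$, then use the symmetry properties to move transposes around: since $AH$ and $AK$ are symmetric, $AH = (AH)' = H'A'$ and $AKAH = (AK)'(AH)' = \dots$, allowing the trailing factors to be recombined into $K$. A symmetric manipulation from the other side, using (P4) for the $HA$ and $KA$ symmetries, collapses the leading factors. The cleanest route is probably to prove the two auxiliary identities $HA = KA$ and $AH = AK$ first, each by a short symmetry-plus-substitution argument, and then conclude $H = HAH = HAK = KAK = K$ by chaining those two identities through (P2).

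The main obstacle will be organizing the uniqueness argument so that every transpose manipulation is justified by exactly one of the symmetry properties and no step smuggles in an unstated assumption (such as invertibility of $A$, $AH$, or $HA$, none of which hold in general). The danger is a plausible-looking cancellation that is actually invalid for singular matrices; the remedy is to cancel only by substituting a defining identity, never by multiplying by an inverse. I would therefore keep each intermediate equality as a product of the given matrices and verify at each step which of (P1)--(P4) licenses the rewrite, so that the entire derivation uses only the four hypotheses and associativity of matrix multiplication.
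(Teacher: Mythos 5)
The paper never proves this theorem: it is stated as the classical Moore--Penrose/Penrose characterization and used as a black box, so there is no internal proof to compare yours against. Judged on its own, your two-part plan is correct and is the standard argument. The existence half is sound: with $A=U\Sigma V'$ and $A^+=V\Sigma^+U'$ (exactly the paper's definition of $A^+$, so the two halves cohere), the identities $AA^+=U\Sigma\Sigma^+U'$ and $A^+A=V\Sigma^+\Sigma V'$ together with the fact that $\Sigma\Sigma^+$ and $\Sigma^+\Sigma$ are 0/1 diagonal projections give (P3) and (P4), and $\Sigma\Sigma^+\Sigma=\Sigma$, $\Sigma^+\Sigma\Sigma^+=\Sigma^+$ give (P1) and (P2). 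Your uniqueness route via the two auxiliary identities is also the right one, and the details you leave implicit do go through: for two solutions $H,K$,
\begin{equation*}
AH=(AH)'=H'A'=H'(AKA)'=(AH)'(AK)'=(AH)(AK)=(AHA)K=AK,
\end{equation*}
using (P3) for both matrices and (P1); the mirror-image computation with (P4) gives $HA=KA$; and then $H=HAH=H(AK)=(HA)K=(KA)K=K$ by (P2). Your instinct to forbid any cancellation by inverses and to justify every transpose move by exactly one of the four properties is precisely what makes this argument valid for singular and rectangular $A$. The only cosmetic caution is in the existence part: if you use the full SVD, $U$ and $V$ are orthogonal and all the orthogonality relations you need hold; if you use a thin SVD, then $U'U=I_r$ and $V'V=I_r$ still hold but $UU'$ and $VV'$ are not identities, so phrase the bookkeeping with the products in the order your computation actually uses.
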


If we consider properties (\ref{property1}) - (\ref{property4}) which characterize the M-P pseudoinverse, we can observe that properties (\ref{property1}), (\ref{property3}) and (\ref{property4}) are all linear in $H$, and the only non-linearity is property (\ref{property2}), which is quadratic. Another important point to observe is that without property (\ref{property1}), $H$ could be the all-zero matrix and satisfy properties (\ref{property2}), (\ref{property3}) and (\ref{property4}). Whenever property (\ref{property1}) holds, $H$ is called a \emph{generalized inverse}.
So, in the simplest approach, we can consider minimizing $\|H\|_{1}$ subject to property (\ref{property1}) and \emph{any subset} of the properties (\ref{property3}) and (\ref{property4}). In this manner, we get several (four) new sparse pseudoinverses which can all be calculated by linear programming.

To go further, we can also consider convex relaxations of property (\ref{property2}). To pursue that direction,
we enter the realm of semi-definite programming (see \cite{BV2004},\cite{SDP},\cite{sdphandbook}, for example).

We can see  property (\ref{property2}) as
\[
h_{i\cdot} A h_{\cdot j}=h_{ij},
\]
for all $ij\in m\times n$.
So, we have $mn$ quadratic equations to enforce, which
we can see as
\begin{align}\label{ha_inprod}
		\frac{1}{2}\left(h_{i\cdot}, h_{\cdot j}^{\prime}\right)\left[\begin{array}{cc}
										   \textbf{0}_{m} & A \\
										   A' & \textbf{0}_{n}
										   \end{array}\right]
								            \left(\begin{array}{c}
								            	   h_{i\cdot}^{\prime} \\
									   	   h_{\cdot j}
										   \end{array}\right) = h_{ij},
	\end{align}
for all $ij\in m\times n$.
We can view these quadratic equations (\ref{ha_inprod}) as
\[
 \frac{1}{2} \left\langle Q, \left(\begin{array}{c}
								            	   h_{i\cdot}^{\prime} \\
									   	   h_{\cdot j}
										   \end{array}\right)\left(h_{i\cdot}, h_{\cdot j}^{\prime}\right)
\right\rangle = h_{ij},
\]
for all $ij\in m\times n$,
where
\[
Q := \left[\begin{array}{cc}
										   \textbf{0}_{m} & A \\
										   A' & \textbf{0}_{n}
										   \end{array}\right] \in \mathbb{R}^{(m+n)\times (m+n)},
\]
and $\left\langle \cdot, \cdot \right\rangle$
denotes element-wise dot-product.

Now, we lift the variables to matrix space, defining matrix variables
\[
\mathcal{H}_{ij}:= \left(\begin{array}{c}
								            	   h_{i\cdot}^{\prime} \\
									   	   h_{\cdot j}
										   \end{array}\right)\left(h_{i\cdot}, h_{\cdot j}^{\prime}\right)\in \mathbb{R}^{(m+n)\times (m+n)},
\]
for all $ij\in m\times n$.
So, we can see (\ref{ha_inprod}) as the \emph{linear} equations
\begin{equation} \label{property2a}
\frac{1}{2}\left\langle Q, \mathcal{H}_{ij}\right\rangle = h_{ij},
\end{equation}
for all $ij\in m\times n$,
together with the \emph{non-convex} equations
\begin{equation} \label{property2b}
\mathcal{H}_{ij} - \left(\begin{array}{c}
								            	   h_{i\cdot}^{\prime} \\
									   	   h_{\cdot j}
										   \end{array}\right)\left(h_{i\cdot}, h_{\cdot j}^{\prime}\right) = \textbf{0}_{m+n},
\end{equation}
for all $ij\in m\times n$. Next, we relax the  equations (\ref{property2b}) via the \emph{convex semi-definiteness} constraints:
\begin{equation} \label{property2b_r}
\mathcal{H}_{ij} - \left(\begin{array}{c}
								            	   h_{i\cdot}^{\prime} \\
									   	   h_{\cdot j}
										   \end{array}\right)\left(h_{i\cdot}, h_{\cdot j}^{\prime}\right)  \succeq \textbf{0}_{m+n},
\end{equation}
for all $ij\in m\times n$.
So we can relax the M-P property (\ref{property2}) as
(\ref{property2a}) and (\ref{property2b_r}), for all $ij\in m\times n$.

To put (\ref{property2b_r}) into a standard form for semi-definite programming,
we create variables vectors $x_{ij}\in\mathbb{R}^{m+n}$,
and we have linear equations
\begin{equation}
x_{ij}=\left(\begin{array}{c}
								            	   h_{i\cdot}^{\prime} \\
									   	   h_{\cdot j}
										   \end{array}\right).
\end{equation}

Next, for all $ij\in m\times n$, we introduce symmetric positive semi-definite matrix variables $Z_{ij}\in \mathbb{R}^{(m+n + 1) \times (m+n+1)}$, interpreting the entries as follows:
\begin{equation}
Z_{ij} = \left[\begin{array}{cc}
				   x_{ij}^{(0)} & x_{ij}^{\prime} \\
				   x_{ij} & \mathcal{H}_{ij}
				   \end{array}\right].
\end{equation}
Then the linear equation
\begin{equation}
x_{ij}^{(0)} =1
\end{equation}
and $Z_{ij}
\succeq \textbf{0}_{(m+n+1)\times (m+n+1)}$
precisely enforce (\ref{property2b_r}).

Finally, we re-cast (\ref{property2a}) as
\begin{equation} \label{property2a_p}
\frac{1}{2}\left\langle \bar{Q}, Z_{ij}\right\rangle = h_{ij},
\end{equation}
where
\begin{equation}
\bar{Q}:=\left[\begin{array}{cc}
				      0 & \vec{0}_{m+n}^{\prime} \\
				      \vec{0}_{m+n} & Q
				      \end{array}\right] \in \mathbb{R}^{(m+n + 1) \times (m+n+1)}.
\end{equation}

In summary, we can consider minimizing $\|H\|_{1}$ subject to property (\ref{property1}) and \emph{any subset} of  (\ref{property3}), (\ref{property4}), and (\ref{property2a})+(\ref{property2b_r}) for all $ij\in m\times n$
(though we reformulate (\ref{property2a})+(\ref{property2b_r}) as above, so it is in a convenient form for
semi-definite programming solvers like \verb;CVX;).
In doing so, we get many (eight)
 new sparse pseudoinverses which  are all tractable (via linear or semi-definite programming).

Of course all of these optimization problems have feasible solutions, because
the M-P pseudoinverse $A^+$ always gives a feasible solution.
For the cases in which we have semi-definite programs,
an important issue is whether there is a strictly feasible solution --- the \emph{Slater
condition(/constraint qualification)} --- as
that is sufficient for strong duality to hold and affects the
convergence of algorithms (e.g., see \cite{BV2004}). Even if the Slater condition
does not hold, there is a facial-reduction algorithm that can
induce the Slater condition to hold on an appropriate face of the feasible region (see \cite{Pataki2013}).

\section{Preliminary computational experiments}\label{sec:comp}

We made some preliminary tests of our ideas, using CVX/Matlab (see \cite{cvx1}, \cite{cvx2}).
Before describing our experimental setup, we observe the following results.

\begin{prop}
If $A$ has full column rank $n$ and $H$ satisfies (\ref{property1}), then $H$  is a left inverse of $A$,
and $H$ satisfies (\ref{property2}) and (\ref{property4}).
If $A$ has full row rank $m$ and $H$ satisfies (\ref{property1}), then $H$ is a right inverse of $A$, and
$H$ satisfies (\ref{property2}) and (\ref{property3}),
\end{prop}

\begin{cor}
If $A$ has full column rank $n$ and $H$ satisfies (\ref{property1}) and (\ref{property3}), then $H=A^+$.
If $A$ has full row rank $m$ and $H$ satisfies (\ref{property1}) and (\ref{property4}), then $H=A^+$.
\end{cor}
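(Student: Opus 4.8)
The plan is to obtain this as an immediate consequence of the preceding Proposition combined with the uniqueness assertion in the characterization Theorem. The guiding observation is that, in each of the two cases, the Proposition supplies exactly the two Moore-Penrose properties that are \emph{not} already assumed in the hypothesis; thus all four of (\ref{property1})--(\ref{property4}) hold at once, and uniqueness of $A^+$ then forces $H=A^+$.

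First I would treat the full-column-rank statement. Assuming $A$ has full column rank $n$ and that $H$ satisfies (\ref{property1}) and (\ref{property3}), I would invoke the first half of the Proposition: since $A$ has full column rank and $H$ satisfies (\ref{property1}), the Proposition guarantees that $H$ also satisfies (\ref{property2}) and (\ref{property4}). Adjoining these to the hypothesized (\ref{property1}) and (\ref{property3}) shows that $H$ satisfies all four properties (\ref{property1})--(\ref{property4}). By the characterization Theorem, $A^+$ is the \emph{unique} matrix enjoying all four, whence $H=A^+$.

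Next I would handle the full-row-rank statement, which is entirely symmetric. Assuming $A$ has full row rank $m$ and that $H$ satisfies (\ref{property1}) and (\ref{property4}), the second half of the Proposition yields (\ref{property2}) and (\ref{property3}); together with the hypothesized (\ref{property1}) and (\ref{property4}) this again produces the full set of four Moore-Penrose properties, so the uniqueness in the Theorem again gives $H=A^+$.

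Because all of the substantive work is carried by the Proposition, I do not expect any real obstacle in the Corollary itself. The only point requiring care is the bookkeeping check that the pair of properties delivered by the Proposition is precisely the pair omitted from the hypothesis, so that the assumed and derived properties together constitute exactly the full collection (\ref{property1}), (\ref{property2}), (\ref{property3}), (\ref{property4}) demanded by the uniqueness characterization; once this is verified, the conclusion $H=A^+$ is immediate in both cases.
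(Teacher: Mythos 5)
Your proposal is correct and is exactly the argument the paper intends: the Corollary is stated as an immediate consequence of the preceding Proposition, whose two derived properties ((\ref{property2}) and (\ref{property4}) in the full-column-rank case, (\ref{property2}) and (\ref{property3}) in the full-row-rank case) complete the hypothesized pair to all four Moore-Penrose properties, after which the uniqueness assertion of the characterization Theorem yields $H=A^+$. Your bookkeeping of which properties are assumed versus derived matches the paper's Proposition precisely, so there is nothing to add.
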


Because of these results, we decided to focus our experiments on matrices $A$ with rank less than $\min\{m,n\}$.
We generated random dense $n\times n$  rank-$r$ matrices $A$ of the form $A=UV$, where
each $U$ and $V'$ are $n\times r$, with $n=40$, and five instance for each $r=4,8,16,\ldots,36$.
The entries in $U$ and $V$ were iid uniform $(-1,1)$. We then scaled each $A$
by a multiplicative factor of $0.01$, which had the effect of making $A+$ fully dense
to an entry-wise zero-tolerance of $0.1$. In computing various sparse pseudoinverses,
we used a zero-tolerance of $10^{-5}$. We measured sparsity of a sparse pseudoinverse
as the number of its nonzero components divided by $n^2$. We measured quality of
a sparse pseudoinverse $H$, relative to the M-P pseudoinverse $A^+$ in two ways:
\begin{itemize}
\item \emph{least-squares ratio} (`lsr'): $\| AHb-b\|_2/\| AA^+b-b\|_2$, with arbitrarily $b:=\vec{1}_{m}$.
(Note that $x:=A^+b$ always minimizes $\|Ax-b\|_2$.)
\item \emph{$2$-norm ratio} (`2nr'): $\| HA\vec{1}_{n}\|_2/\| A^+A\vec{1}_{n}\|_2$.
(Note that $x:=HA\vec{1}_{n}$ is always a solution to $Ax=A\vec{1}_{n}$, whenever $H$ satisfies
(\ref{property1}), and one that minimizes $\|x\|_2$ is given by $x:=A^+A\vec{1}_{n}$.)
\end{itemize}




\begin{prop}
If $H$ satisfies (\ref{property1}) and (\ref{property3}), then $AH=AA^+$.
\end{prop}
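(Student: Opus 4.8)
The plan is to exploit the fact that $A^+$ itself satisfies all four Moore-Penrose properties of the Theorem, so that both $AH$ and $AA^+$ are \emph{symmetric} (by (\ref{property3}) applied to $H$ and to $A^+$, respectively) and both satisfy the generalized-inverse identity (\ref{property1}). The idea is to form the two products $AA^+\cdot AH$ and $AH\cdot AA^+$ and collapse each one using the appropriate instance of (\ref{property1}).

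First I would compute $AA^+\cdot AH = (AA^+A)H = AH$, using $AA^+A = A$, which is (\ref{property1}) for $A^+$. Symmetrically, $AH\cdot AA^+ = (AHA)A^+ = AA^+$, using $AHA = A$, which is the hypothesis (\ref{property1}) for $H$. This yields the two identities $AA^+\cdot AH = AH$ and $AH\cdot AA^+ = AA^+$.

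Next I would transpose the first identity: since $(AA^+\cdot AH)' = (AH)'(AA^+)'$, and both $AH$ and $AA^+$ are symmetric, the transpose reads $AH\cdot AA^+ = AH$. Comparing this with the second identity $AH\cdot AA^+ = AA^+$ immediately gives $AH = AA^+$.

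The computation is short, so there is no serious obstacle; the only step requiring any insight is recognizing that the two cross-products $AA^+\cdot AH$ and $AH\cdot AA^+$ simplify to $AH$ and $AA^+$ respectively, so that a single transposition (licensed by (\ref{property3})) bridges them. Conceptually, this reflects the fact that $AH$ and $AA^+$ are both orthogonal projectors onto the column space of $A$: each is symmetric and idempotent (idempotency of $AH$ follows from $(AH)^2 = (AHA)H = AH$), and each has range equal to $\mathrm{col}(A)$ by (\ref{property1}); since two orthogonal projectors with the same range coincide, they must be equal. The algebraic argument above is simply a streamlined version of this observation that avoids invoking the uniqueness of orthogonal projections explicitly.
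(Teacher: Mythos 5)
Your proof is correct, but it takes a noticeably different route from the paper's. The paper argues by a rewrite chain: starting from $AHA=AA^+A$ (both sides equal $A$ by (\ref{property1})), it uses the symmetry (\ref{property3}) of $AH$ and of $AA^+$ to rewrite this as $H'A'A=(A^+)'A'A$, transposes to get $A'AH=A'AA^+$, left-multiplies by $(A^+)'$, and finally collapses both sides by appealing to the ``well-known property'' $(A^+)'A'A=A$. Your argument never needs that auxiliary identity: you evaluate the two cross-products $AA^+\cdot AH=AH$ and $AH\cdot AA^+=AA^+$ directly from (\ref{property1}) (for $A^+$ and for $H$, respectively), and then a single transposition, licensed by (\ref{property3}) for both $AH$ and $AA^+$, turns the first identity into $AH\cdot AA^+=AH$, which compared with the second gives $AH=AA^+$. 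This is exactly the classical argument that two symmetric idempotents with the same range coincide (in essence Penrose's uniqueness argument), and it is self-contained in a way the paper's is not: the identity $(A^+)'A'A=A$ that the paper invokes at the end is itself just (\ref{property1}) plus (\ref{property3}) applied to $A^+$, which is precisely what you use explicitly. Both proofs consume the same ingredients --- (\ref{property1}) and (\ref{property3}) for both $H$ and $A^+$ --- so the difference is one of packaging: yours exposes the orthogonal-projector structure underlying the statement, while the paper's is a direct sequence of algebraic manipulations.
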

\begin{proof}
\begin{eqnarray*}
AHA &=& AA^+A \qquad \mbox{(by (\ref{property1}))}\\
H'A'A &=& (A^+)'A'A \qquad \mbox{(by (\ref{property3}))}\\
A'AH &=& A'AA^+\\
(A^+)'A'AH &=& (A^+)'A'AA^+\\
AH&=&AA^+,
\end{eqnarray*}
the last equation following directly from a well-known property of $A^+$.
\end{proof}
\begin{cor}\label{cor:lss}
If $H$ satisfies (\ref{property1}) and (\ref{property3}), 
 then $x:=Hb$ (and of course $A^+b$)
solves  $\min\{\|Ax-b\|_2 ~:~ x\in\mathbb{R}^n\}$.
\end{cor}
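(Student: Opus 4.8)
The plan is to use the preceding Proposition (the one just proved immediately above, stating that if $H$ satisfies (\ref{property1}) and (\ref{property3}), then $AH = AA^+$) as the sole substantive input, and then to reduce the least-squares claim to the known optimality property of the M-P pseudoinverse. The key observation is that the least-squares objective $\|Ax-b\|_2$ depends on $x$ only through $Ax$, so if two candidate points $x_1 := Hb$ and $x_2 := A^+b$ satisfy $Ax_1 = Ax_2$, they attain exactly the same objective value, and hence one is a minimizer if and only if the other is.

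First I would invoke the Proposition to write $AH = AA^+$. Second, I would right-multiply this matrix identity by the fixed vector $b$ to obtain $A(Hb) = (AH)b = (AA^+)b = A(A^+b)$. This shows $A x = A(A^+b)$ where $x := Hb$, so the residual vectors coincide: $Ax - b = A(A^+b) - b$, and therefore $\|Ax - b\|_2 = \|A(A^+b) - b\|_2$. Third, I would appeal to the well-known fact (already used freely in the paper, e.g. in the `lsr' discussion noting that $x := A^+b$ always minimizes $\|Ax-b\|_2$) that $A^+b$ solves the least-squares problem $\min\{\|Ax-b\|_2 : x \in \mathbb{R}^n\}$. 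Since $x := Hb$ produces an identical residual, it attains the same minimal objective value, and thus $Hb$ also solves the problem. This completes the argument.

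I do not expect any genuine obstacle here: the corollary is essentially immediate from the Proposition once one notices that the objective factors through $A$. The only point requiring a modicum of care is making explicit that equality of residuals yields equality of objective values and hence joint optimality — in particular, one should phrase it as ``$Hb$ attains the same value as the known minimizer $A^+b$, hence is itself a minimizer'' rather than attempting any independent verification of first-order optimality conditions. A self-contained proof would therefore consist of exactly three short lines: cite the Proposition, multiply through by $b$ to get $AHb = AA^+b$, and conclude that the common residual is the minimal one by the optimality of $A^+b$.
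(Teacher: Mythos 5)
Your proof is correct and follows exactly the route the paper intends: the corollary is stated as an immediate consequence of the preceding proposition ($AH = AA^+$), and your argument---multiply by $b$ to get $AHb = AA^+b$, then invoke the optimality of $A^+b$ for the least-squares problem---is precisely the omitted justification. No gaps; this matches the paper's approach.
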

Similarly, we have the following two results:
\begin{prop}
If $H$ satisfies (\ref{property1}) and (\ref{property4}), then $HA=A^+A$.
\end{prop}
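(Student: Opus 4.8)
The plan is to mirror exactly the proof of the preceding proposition (that properties (\ref{property1}) and (\ref{property3}) imply $AH=AA^+$), but with all products transposed so that the symmetry property (\ref{property4}) on $HA$ plays the role that (\ref{property3}) played on $AH$. The target identity $HA=A^+A$ is the ``left-handed'' analogue of $AH=AA^+$, so the entire argument should go through verbatim after reflecting it.

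First I would start from property (\ref{property1}) written as $AHA=A$, and then instead of transposing the equation and multiplying on the right by $A$, I would multiply on the left by $A$ and transpose using (\ref{property4}). Concretely: from $AHA=A$, take transposes to get $A'(HA)'=A'$, and apply (\ref{property4}) to replace $(HA)'$ by $HA$, yielding $A'HA=A'$. The goal is to manufacture the factor $AA'$ (the row-space analogue of the column-space Gram matrix $A'A$ used before) so that I can invoke the well-known M-P identity $A^+AA'=A'$, or equivalently $A'(A^+)'=$ something that collapses. I would multiply the chain through by $A^+$ and $(A^+)'$ in the appropriate positions, exactly paralleling the four displayed lines of the previous proof, to arrive at $HA=A^+A$.

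The key steps in order are: (1) apply (\ref{property1}) to write $AHA=A$; (2) transpose and use (\ref{property4}) to get a relation of the form $A'HA=A'$ or $AHA=A$ manipulated on the other side; (3) right-multiply by $(A^+)'$ (or left-multiply by $(A^+)'$ as appropriate) to introduce the projector; (4) close with the standard M-P identity $A^+AA'=A'$ (the transpose of $AA'(A^+)'=A$, both of which follow from (\ref{property3}) and (\ref{property4}) applied to $A^+$ itself). Since the preceding proposition already established the fully analogous chain, I would simply take the transpose of every matrix equation there and relabel (\ref{property3}) as (\ref{property4}), which automatically produces a valid derivation ending in $HA=A^+A$.

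I do not expect a genuine obstacle here, precisely because this is the transpose-dual of a result already proved in the excerpt; the only thing to watch is which well-known property of $A^+$ gets invoked at the final step. In the original proof the closing line used $(A^+)'A'AA^+=AA^+$ (equivalently $AA^+A=A$ combined with $(AA^+)'=AA^+$); here the dual closing identity is $A^+AA'(A^+)'=A^+A$, which follows from $AA^+A=A$ together with $(HA)$-symmetry for $A^+$, namely $(A^+A)'=A^+A$. Verifying that this particular M-P identity is indeed the ``well-known property'' being cited is the one place where I would be careful to keep the transposes straight, but it is routine. Alternatively, and perhaps most cleanly, I could bypass the recomputation entirely by applying the previous proposition to the transposed matrix $A'$ with the candidate generalized inverse $H'$: one checks that $H'$ satisfies (\ref{property1}) and (\ref{property3}) relative to $A'$ (using that $(A')^+=(A^+)'$), concludes $A'H'=A'(A')^+$, and then transposes to recover $HA=A^+A$.
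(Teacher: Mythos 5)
Your overall plan is exactly what the paper intends: the paper gives no separate derivation for this proposition (it introduces it with ``Similarly''), so the expected argument is precisely the transpose-mirror of the displayed chain proving that (\ref{property1}) and (\ref{property3}) give $AH=AA^+$. In particular, your final paragraph --- apply the previous proposition to $A'$ with candidate inverse $H'$, check that $H'$ satisfies (\ref{property1}) and (\ref{property3}) relative to $A'$, use $(A')^+=(A^+)'$, and transpose $A'H'=A'(A')^+$ to get $HA=A^+A$ --- is a complete and clean way of making ``similarly'' rigorous, and on its own it settles the proposition.

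One concrete slip in your step-by-step chain, however: from $AHA=A$, transposing gives $(HA)'A'=A'$, \emph{not} $A'(HA)'=A'$ (the transpose reverses the order of the factors $A$ and $HA$; note $A'(HA)'=A'A'H'$, which is a mis-grouping). Consequently, applying (\ref{property4}) yields $HAA'=A'$, not the identity $A'HA=A'$ that you state; the latter is false in general. For instance, with $A=\left(\begin{smallmatrix}0&1\\0&0\end{smallmatrix}\right)$ and $H=A^+=A'$ (which satisfies (\ref{property1}) and (\ref{property4})), one has $A'HA=\textbf{0}_{2}\neq A'$. With the corrected identity, the mirrored chain closes just as you predict: from $HAA'=A^+AA'$, right-multiply by $(A^+)'$ and use the well-known property $AA'(A^+)'=A(A^+A)'=AA^+A=A$ to conclude $HA=A^+A$. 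Since you explicitly flagged the need to keep the transposes straight and your closing identity $A^+AA'(A^+)'=A^+A$ is the correct one, this is a repairable slip rather than a fatal gap --- but as literally written, step (2) of your chain would not survive refereeing.
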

\begin{cor}\label{cor:minnorm}
If $H$ satisfies (\ref{property1}) and (\ref{property4}), and $b$ is in the column space of $A$,
then $Hb$ (and of course $A^+b$) solves $\min\{\|x\|_2 ~:~ Ax=b,~ x\in\mathbb{R}^n\}$.
\end{cor}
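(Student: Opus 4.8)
The plan is to leverage the Proposition immediately preceding this corollary, namely that (\ref{property1}) and (\ref{property4}) together force $HA=A^+A$, and to reduce the claim to the single well-known fact that $A^+b$ is the minimum-norm solution of a consistent system $Ax=b$. So the whole argument rests on establishing two things: that $x:=Hb$ is feasible, and that it coincides with $A^+b$.

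First I would dispatch feasibility. Since $b$ lies in the column space of $A$, there exists $y\in\mathbb{R}^n$ with $Ay=b$. Then, using (\ref{property1}),
\[
A(Hb)=AHb=AHAy=Ay=b,
\]
so $Hb$ satisfies the constraint $Ax=b$. (The parenthetical claim that $A^+b$ is feasible is the same computation with $H$ replaced by $A^+$, since $A^+$ itself satisfies (\ref{property1}).)

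Next I would show $Hb=A^+b$. Writing $b=Ay$ again and invoking the preceding Proposition, $HA=A^+A$, we get
\[
Hb=HAy=A^+Ay=A^+b.
\]
It then remains only to recall the standard property of the Moore-Penrose pseudoinverse: for $b$ in the column space of $A$, the vector $A^+b$ is the unique minimum-2-norm solution of $Ax=b$. Since $Hb=A^+b$, the vector $Hb$ inherits this minimizing property, which is exactly the assertion.

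The only subtlety — and where I would focus attention — is that the reduction $Hb=A^+b$ genuinely needs $b\in\mathrm{col}(A)$: the identity $HA=A^+A$ only controls $H$ and $A^+$ on the row space of $A$, i.e., on vectors of the form $Ay$, so without the column-space hypothesis the step $Hb=HAy$ would be unavailable and $Hb$ and $A^+b$ could differ on the orthogonal complement. Everything else is routine, and I would simply cite the minimum-norm characterization of $A^+b$ as the well-known fact it is, in parallel with how Corollary~\ref{cor:lss} cites the analogous least-squares property.
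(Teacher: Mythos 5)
Your proof is correct and follows the route the paper intends: the corollary is meant to be an immediate consequence of the preceding Proposition ($HA=A^+A$), reducing $Hb$ to $A^+b$ for $b$ in the column space of $A$, and then citing the well-known minimum-norm property of the Moore--Penrose pseudoinverse. Your feasibility check via (\ref{property1}) and your remark on where the column-space hypothesis is genuinely needed are sound additions, but the substance matches the paper's (implicit) argument.
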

\noindent So in the situations covered by Corollaries \ref{cor:lss} and \ref{cor:minnorm}, we can seek and use sparser pseudoinverses than $A^+$. Our computational results are summarized in Table \ref{table1}.
`1nr' (1-norm ratio) is simply $\|H\|_1/\|A^+\|_1$. `sr' (sparsity ratio) is simply $\|H\|_0/\|A^+\|_0$.
Note that the entries of 1 reflect the results above.
We observe that sparsity can be gained versus the M-P pseudoinverse, often with a modest decrease in quality of the pseudoinverse, and we can observe some trends as the rank varies.

\begin{table}[t]
\setlength{\tabcolsep}{3.75pt}
 \renewcommand{\arraystretch}{0.8}
  \centering
  \caption{Sparsity vs quality ($m=n=40$)}
    \begin{tabular}{r|r|rrrr|rrrr|rrrr|rrrr}
    \toprule
       \multicolumn{2}{c}{}    & \multicolumn{4}{c}{P1}        & \multicolumn{4}{c}{P1+P3}       & \multicolumn{4}{c}{P1+P4}       & \multicolumn{4}{c}{P1+P3+P4} \\
    \midrule
    \multicolumn{1}{c}{r} & \multicolumn{1}{c}{$\|A^+\|_1$} & \multicolumn{1}{c}{1nr} & \multicolumn{1}{c}{sr} & \multicolumn{1}{c}{lsr} & \multicolumn{1}{c}{2nr} & \multicolumn{1}{c}{1nr} & \multicolumn{1}{c}{sr} & \multicolumn{1}{c}{lsr} & \multicolumn{1}{c}{2nr} & \multicolumn{1}{c}{1nr} & \multicolumn{1}{c}{sr} & \multicolumn{1}{c}{lsr} & \multicolumn{1}{c}{2nr} & \multicolumn{1}{c}{1nr} & \multicolumn{1}{c}{sr} & \multicolumn{1}{c}{lsr} & \multicolumn{1}{c}{2nr} \\
    \midrule
    4     & 586   & 0.44  & 0.01  & 1.07  & 2.27  & 0.60  & 0.10  & 1     & 1.43  & 0.64  & 0.10  & 1.02  & 1     & 0.75  & 0.19  & 1     & 1 \\
    4     & 465   & 0.46  & 0.01  & 1.07  & 1.82  & 0.63  & 0.10  & 1     & 1.43  & 0.63  & 0.10  & 1.01  & 1     & 0.77  & 0.19  & 1     & 1 \\
    4     & 500   & 0.44  & 0.01  & 1.08  & 1.82  & 0.62  & 0.10  & 1     & 1.46  & 0.62  & 0.10  & 1.01  & 1     & 0.76  & 0.19  & 1     & 1 \\
    4     & 503   & 0.41  & 0.01  & 1.28  & 2.00  & 0.62  & 0.10  & 1     & 1.31  & 0.62  & 0.10  & 1.06  & 1     & 0.75  & 0.19  & 1     & 1 \\
    4     & 511   & 0.45  & 0.01  & 1.10  & 2.36  & 0.63  & 0.10  & 1     & 1.55  & 0.64  & 0.10  & 1.09  & 1     & 0.78  & 0.19  & 1     & 1 \\
    8     & 855   & 0.53  & 0.04  & 1.17  & 1.63  & 0.69  & 0.20  & 1     & 1.28  & 0.68  & 0.20  & 1.05  & 1     & 0.80  & 0.36  & 1     & 1 \\
    8     & 851   & 0.53  & 0.04  & 1.22  & 1.60  & 0.69  & 0.20  & 1     & 1.33  & 0.69  & 0.20  & 1.07  & 1     & 0.80  & 0.36  & 1     & 1 \\
    8     & 841   & 0.53  & 0.04  & 1.25  & 1.70  & 0.69  & 0.20  & 1     & 1.34  & 0.69  & 0.20  & 1.07  & 1     & 0.80  & 0.36  & 1     & 1 \\
    8     & 761   & 0.52  & 0.04  & 1.05  & 1.70  & 0.69  & 0.20  & 1     & 1.32  & 0.68  & 0.20  & 1.09  & 1     & 0.81  & 0.36  & 1     & 1 \\
    8     & 864   & 0.52  & 0.04  & 1.09  & 1.40  & 0.69  & 0.20  & 1     & 1.21  & 0.68  & 0.20  & 1.04  & 1     & 0.80  & 0.36  & 1     & 1 \\
    12    & 1150  & 0.60  & 0.09  & 1.26  & 1.68  & 0.74  & 0.30  & 1     & 1.26  & 0.75  & 0.30  & 1.12  & 1     & 0.86  & 0.51  & 1     & 1 \\
    12    & 1198  & 0.59  & 0.09  & 1.20  & 1.70  & 0.75  & 0.30  & 1     & 1.25  & 0.75  & 0.30  & 1.05  & 1     & 0.85  & 0.51  & 1     & 1 \\
    12    & 1236  & 0.59  & 0.09  & 1.10  & 1.28  & 0.75  & 0.30  & 1     & 1.17  & 0.75  & 0.30  & 1.24  & 1     & 0.86  & 0.51  & 1     & 1 \\
    12    & 1134  & 0.60  & 0.09  & 1.38  & 1.43  & 0.75  & 0.30  & 1     & 1.19  & 0.74  & 0.30  & 1.09  & 1     & 0.85  & 0.51  & 1     & 1 \\
    12    & 1135  & 0.60  & 0.09  & 1.20  & 1.44  & 0.75  & 0.30  & 1     & 1.21  & 0.75  & 0.30  & 1.14  & 1     & 0.85  & 0.51  & 1     & 1 \\
    16    & 1643  & 0.67  & 0.16  & 1.36  & 1.85  & 0.79  & 0.40  & 1     & 1.30  & 0.80  & 0.40  & 1.17  & 1     & 0.90  & 0.64  & 1     & 1 \\
    16    & 1421  & 0.65  & 0.16  & 1.20  & 1.61  & 0.79  & 0.40  & 1     & 1.29  & 0.79  & 0.40  & 1.31  & 1     & 0.90  & 0.64  & 1     & 1 \\
    16    & 1518  & 0.65  & 0.16  & 1.33  & 1.38  & 0.79  & 0.40  & 1     & 1.20  & 0.80  & 0.40  & 1.30  & 1     & 0.89  & 0.64  & 1     & 1 \\
    16    & 1512  & 0.66  & 0.16  & 1.45  & 1.68  & 0.80  & 0.40  & 1     & 1.34  & 0.79  & 0.40  & 1.16  & 1     & 0.89  & 0.64  & 1     & 1 \\
    16    & 1539  & 0.65  & 0.16  & 1.18  & 1.25  & 0.79  & 0.40  & 1     & 1.19  & 0.79  & 0.40  & 1.29  & 1     & 0.89  & 0.64  & 1     & 1 \\
    20    & 2147  & 0.72  & 0.25  & 1.51  & 1.33  & 0.84  & 0.50  & 1     & 1.15  & 0.84  & 0.50  & 1.42  & 1     & 0.94  & 0.75  & 1     & 1 \\
    20    & 2111  & 0.72  & 0.25  & 1.81  & 1.44  & 0.83  & 0.50  & 1     & 1.35  & 0.84  & 0.50  & 1.48  & 1     & 0.93  & 0.75  & 1     & 1 \\
    20    & 2148  & 0.71  & 0.25  & 2.08  & 1.49  & 0.84  & 0.50  & 1     & 1.32  & 0.83  & 0.50  & 1.45  & 1     & 0.93  & 0.75  & 1     & 1 \\
    20    & 2061  & 0.72  & 0.25  & 1.50  & 1.49  & 0.84  & 0.50  & 1     & 1.35  & 0.84  & 0.50  & 1.31  & 1     & 0.93  & 0.75  & 1     & 1 \\
    20    & 2283  & 0.72  & 0.25  & 1.61  & 1.47  & 0.83  & 0.50  & 1     & 1.47  & 0.84  & 0.50  & 1.20  & 1     & 0.94  & 0.75  & 1     & 1 \\
    24    & 2865  & 0.77  & 0.36  & 1.86  & 1.24  & 0.87  & 0.60  & 1     & 1.18  & 0.87  & 0.60  & 1.51  & 1     & 0.96  & 0.84  & 1     & 1 \\
    24    & 3228  & 0.78  & 0.36  & 2.17  & 1.34  & 0.87  & 0.60  & 1     & 1.37  & 0.88  & 0.60  & 1.90  & 1     & 0.96  & 0.84  & 1     & 1 \\
    24    & 2884  & 0.77  & 0.36  & 2.27  & 1.72  & 0.87  & 0.60  & 1     & 1.32  & 0.87  & 0.60  & 1.55  & 1     & 0.96  & 0.84  & 1     & 1 \\
    24    & 2853  & 0.78  & 0.36  & 1.50  & 1.66  & 0.88  & 0.60  & 1     & 1.50  & 0.87  & 0.60  & 1.53  & 1     & 0.96  & 0.84  & 1     & 1 \\
    24    & 2944  & 0.78  & 0.36  & 1.72  & 1.48  & 0.87  & 0.60  & 1     & 1.64  & 0.88  & 0.60  & 1.64  & 1     & 0.96  & 0.84  & 1     & 1 \\
    28    & 4359  & 0.82  & 0.49  & 1.69  & 1.65  & 0.90  & 0.70  & 1     & 1.63  & 0.91  & 0.70  & 1.89  & 1     & 0.98  & 0.91  & 1     & 1 \\
    28    & 4268  & 0.83  & 0.49  & 2.27  & 1.98  & 0.91  & 0.70  & 1     & 1.79  & 0.91  & 0.70  & 2.08  & 1     & 0.98  & 0.91  & 1     & 1 \\
    28    & 4069  & 0.83  & 0.49  & 2.35  & 1.51  & 0.91  & 0.70  & 1     & 1.43  & 0.91  & 0.70  & 2.25  & 1     & 0.98  & 0.91  & 1     & 1 \\
    28    & 3993  & 0.83  & 0.49  & 2.30  & 1.58  & 0.90  & 0.70  & 1     & 1.27  & 0.91  & 0.70  & 2.19  & 1     & 0.97  & 0.91  & 1     & 1 \\
    28    & 4387  & 0.83  & 0.49  & 2.54  & 1.78  & 0.91  & 0.70  & 1     & 1.34  & 0.91  & 0.70  & 2.76  & 1     & 0.98  & 0.91  & 1     & 1 \\
    32    & 6988  & 0.88  & 0.64  & 4.08  & 1.60  & 0.94  & 0.80  & 1     & 1.81  & 0.94  & 0.80  & 3.54  & 1     & 0.99  & 0.96  & 1     & 1 \\
    32    & 6493  & 0.89  & 0.64  & 3.00  & 1.75  & 0.94  & 0.80  & 1     & 1.79  & 0.94  & 0.80  & 2.35  & 1     & 0.99  & 0.96  & 1     & 1 \\
    32    & 11445 & 0.89  & 0.64  & 4.50  & 4.82  & 0.94  & 0.80  & 1     & 2.58  & 0.94  & 0.80  & 7.18  & 1     & 0.99  & 0.96  & 1     & 1 \\
    32    & 8279  & 0.89  & 0.64  & 5.08  & 2.72  & 0.95  & 0.80  & 1     & 2.31  & 0.94  & 0.80  & 3.39  & 1     & 0.99  & 0.96  & 1     & 1 \\
    32    & 5069  & 0.89  & 0.64  & 2.14  & 1.90  & 0.95  & 0.80  & 1     & 1.74  & 0.94  & 0.80  & 2.26  & 1     & 0.99  & 0.96  & 1     & 1 \\
    36    & 18532 & 0.94  & 0.81  & 11.16 & 2.88  & 0.97  & 0.90  & 1     & 1.85  & 0.97  & 0.90  & 9.80  & 1     & 1.00  & 0.99  & 1     & 1 \\
    36    & 16646 & 0.94  & 0.81  & 10.91 & 2.53  & 0.97  & 0.90  & 1     & 3.04  & 0.97  & 0.90  & 8.07  & 1     & 1.00  & 0.99  & 1     & 1 \\
    36    & 11216 & 0.95  & 0.81  & 4.56  & 1.50  & 0.97  & 0.90  & 1     & 1.60  & 0.97  & 0.90  & 4.93  & 1     & 1.00  & 0.99  & 1     & 1 \\
    36    & 10299 & 0.95  & 0.81  & 6.12  & 1.45  & 0.98  & 0.90  & 1     & 2.14  & 0.97  & 0.90  & 5.37  & 1     & 1.00  & 0.99  & 1     & 1 \\
    36    & 11605 & 0.94  & 0.81  & 5.70  & 1.56  & 0.97  & 0.90  & 1     & 2.17  & 0.98  & 0.90  & 5.65  & 1     & 1.00  & 0.99  & 1     & 1 \\
    \bottomrule
    \end{tabular}%
  \label{table1}%
\end{table}%


\section{Conclusions and ongoing work}\label{sec:conclusion}

We have introduced eight  tractable pseudoinverses
based on using 1-norm minimization to
induce sparsity and making
convex relaxations of the M-P properties.
It remains to be seen if any of these new pseudoinverses
will be found to be valuable in practice. There is a natural tradeoff
between sparsity and closeness to the M-P properties,
and where one wants to be on this spectrum may well be
application dependent. We are in the process of carrying out
more thorough experiments.

In particular, we are
testing our sparse pseudoinverses that need semi-definite programming.
In the manner of \cite{MIQCP} and \cite{MIQCPproj}, we may go further and enforce more of (\ref{property2b}) using ``disjunctive cuts'',
producing a better convex relaxation of  (\ref{property2b}) than
(\ref{property2b_r}). But this would come at some significant costs:
(i) greater computational effort, (ii) decreased sparsity as we work our way toward enforcing more of the
M-P properties, (iii) lack of specificity.

Another idea that we are exploring is to develop update algorithms for sparse pseudoinverses.
Of course the Sherman-Morrison-Woodbury formula gives us a convenient way to update a matrix inverse of $A$ after a
low-rank modification. Extending that formula, $A^+$ can be updated efficiently (see
\cite{Meyer1973} and \cite{Riedel1992}). It is an interesting  challenge to see if
we can take advantage of a sparse pseudoinverse of $A$ in calculating a sparse pseudoinverse of a low-rank modification of $A$.

In related work, we are in the process of investigating techniques for decomposing an input matrix $\bar{C}$
into $A+B$, where $B$ has low rank and $A$ has a sparse (pseudo)inverse. In that context,
$A$ is a matrix \emph{variable}, so even the left- and right-inverse constraints
($HA = I_{n}$ and $AH = I_{m}$) are non-convex quadratic equations. So, already in that context,
we are applying similar ideas to the ones we presented here for relaxing these equations.
When we instead consider our new sparse pseudoinverses (based on the M-P
properties), again in the context where $A$ is a matrix variable, already the M-P
properties (\ref{property3}) and (\ref{property4}) are quadratic and
properties (\ref{property1}) and (\ref{property2}) are cubic (in $A$ and $H$).
We can  still apply our basic approach for handling quadratic equations, now to (\ref{property3}) and (\ref{property4}).
As for (\ref{property1}) and (\ref{property2}), we can take a variety of approaches.
One possibility is to introduce auxiliary scalar variables to get back to quadratic equations;
though even then
there are issues to consider in choosing the best way to carry this out (see \cite{SpeakmanLee2015}). Another possibility
is to introduce auxiliary \emph{matrix} variables, to get back to quadratic matrix equations
--- some fascinating questions then arise if we consider how to extend the results in \cite{SpeakmanLee2015}.
Another possibility is to employ semi-definite programming relaxations for polynomial systems (e.g., see \cite{pablo2003}).



%
%

\FloatBarrier

\bibliographystyle{plain}
\bibliography{sparsepseudo}

\begin{thebibliography}{10}

\bibitem{BV2004}
S.~Boyd and L.~Vandenberghe.
\newblock {\em Convex Optimization}.
\newblock Cambridge University Press, 2004.

\bibitem{Yonglin1990}
Y.~Chen.
\newblock The generalized {B}ott-{D}uffin inverse and its applications.
\newblock {\em Linear Algebra and its Applications}, 134:71--91, 1990.

\bibitem{CG1980}
R.E. Cline and T.N.E. Greville.
\newblock A {D}razin inverse for rectangular matrices.
\newblock {\em Linear Algebra and its Applications}, 29:53--62, 1980.

\bibitem{pseudoinverse}
I.~Dokmani\'{c}, M.~Kolund\v{z}ija, and M.~Vetterli.
\newblock {B}eyond {M}oore-{P}enrose: {S}parse pseudoinverse.
\newblock In {\em {ICASSP 2013, {\rm pp. 6526--6530}}}. 2013.

\bibitem{GVL1996}
G.H. Golub and C.F. Van~Loan.
\newblock {\em Matrix Computations (3rd Ed.)}.
\newblock Johns Hopkins University Press, Baltimore, MD, USA, 1996.

\bibitem{cvx2}
M.~Grant and S.~Boyd.
\newblock Graph implementations for nonsmooth convex programs.
\newblock In {\em Recent Advances in Learning and Control. {\rm pp. 95--110}}.
  Springer, 2008.

\bibitem{cvx1}
M.~Grant and S.~Boyd.
\newblock {\href{http://cvxr.com/cvx/}{CVX}}, version 2.1, 2015.

\bibitem{Meyer1973}
C.D. Meyer.
\newblock Generalized inversion of modified matrices.
\newblock {\em SIAM\! J.\! App.\! Math}, 24:315--323, 1973.

\bibitem{pablo2003}
P.A. Parrilo.
\newblock Semidefinite programming relaxations for semialgebraic problems.
\newblock {\em Math. Prog.}, 96(2):293--320, 2003.

\bibitem{Pataki2013}
G.~Pataki.
\newblock Strong duality in conic linear programming: Facial reduction and
  extended duals.
\newblock In {\em Computational and Analytical Mathematics. {\rm pp.
  613--634}}. Springer, 2013.

\bibitem{rao1971}
C.R. Rao and S.K. Mitra.
\newblock {\em Generalized Inverse of Matrices and Its Applications}.
\newblock Probability and Statistics Series. Wiley, 1971.

\bibitem{Riedel1992}
K.S. Riedel.
\newblock A {S}herman-{M}orrison-{W}oodbury identity for rank augmenting
  matrices with application to centering.
\newblock {\em SIAM J. Matrix Anal. Appl.}, 13(2):659--662, 1992.

\bibitem{MIQCP}
A.~Saxena, P.~Bonami, and J.~Lee.
\newblock Convex relaxations of non-convex mixed integer quadratically
  constrained programs: {E}xtended formulations.
\newblock {\em Math. Prog., Ser. B}, 124:383--411, 2010.

\bibitem{MIQCPproj}
A.~Saxena, P.~Bonami, and J.~Lee.
\newblock {C}onvex relaxations of non-convex mixed integer quadratically
  constrained programs: {P}rojected formulations.
\newblock {\em Math. Prog., Ser. A}, 130:359--413, 2010.

\bibitem{SpeakmanLee2015}
E.~Speakman and J.~Lee.
\newblock Quantifying double {M}c{C}ormick.
\newblock \href{http://arxiv.org/abs/1508.02966}{arXiv:1508.02966v2}, 2015.

\bibitem{SDP}
L.~Vandenberghe and S.~Boyd.
\newblock Semidefinite programming.
\newblock {\em SIAM Review}, 38(1):49--95, 1996.

\bibitem{sdphandbook}
H.~Wolkowicz, R.~Saigal, and L.~Vandenberghe.
\newblock {\em Handbook of semidefinite programming: theory, algorithms, and
  applications}.
\newblock Kluwer, Boston, 2000.

\end{thebibliography}

%
%
%
%

\end{document}